\newcommand{\sysn}{\left\{\begin{array}{rcl}}
\newcommand{\sysk}{\end{array}\right.}
\newtheorem{theorem}{Theorem}[section]
\newtheorem{lemma}[theorem]{Lemma}
\theoremstyle{example}
\theoremstyle{definition}
\newtheorem{definition}[theorem]{Definition}
\newtheorem{remark}[theorem]{Remark}
\newtheorem{corollary}[theorem]{Corollary}
\journal{...}
\begin{document}

\begin{frontmatter}



\title{On the cardinality of $S(n)$-spaces}


\author{Alexander V. Osipov}

\ead{OAB@list.ru}


\address{Krasovskii Institute of Mathematics and Mechanics, Ural Federal
 University,

 Ural State University of Economics, Yekaterinburg, Russia}

\begin{abstract}

In this paper, for a topological space $X$ and any positive
integer $n$, we define the cardinal functions $sL_{\theta(n)}(X)$,
$\theta(n)$-quasi-Menger number $qM_{\theta(n)}(X)$ and
$s(n)$-quasi-Menger number $qM_{s(n)}(X)$. We prove the following
statements:

$\bullet$ For every $S(2n)$-space $X$,  $|X|\leq 2^{sL_{\theta
(n)}(X)\kappa_{\theta (n)}(X)}$.

$\bullet$ For every $S(2n)$-space $X$, $|X|\leq 2^{qM_{\theta
(n)}(X)\kappa_{\theta (n)}(X)}$.

$\bullet$ For every $S(2n)$-space $X$, $|X|\leq 2^{qM_{s
(n)}(X)\kappa_{\theta (n)}(X)}$.

Similar results are stated for $S(2n-1)$-spaces.



\end{abstract}

\begin{keyword} Cardinal function \sep $S(n)$-space \sep
$\theta^n$-closure \sep $S(n)$-$\theta$-closed \sep quasi-Menger
number


\MSC[2010] 54A25 \sep 54D10 \sep 54D25

\end{keyword}

\end{frontmatter}



\section{Introduction}

 In 1966 Velichko \cite{vel} introduced the notion of $\theta$-closedness.
 For a subset $M$ of a topological space $X$ the $\theta$-closure
 is defined by $cl_{\theta} M=\{ x\in X:$ every closed
 neighborhood of $x$ meets $M \}$, $M$ is $\theta$-closed if
 $cl_{\theta} M=M$. This concept was used by many authors for the
 study of Hausdorff nonregular spaces \cite{digi,gk,ham,her,povo,str} and [16-23]. The $S(n)$-spaces
 were introduced by Viglino in 1969 (see \cite{vig}) under the name
 $\overline{T}_n$-spaces. After that $S(n)$-spaces, $S(n)$-closed
 and $S(n)$-minimal spaces were studied by other authors. For
 example, Porter in 1969 (see \cite{por}) studied minimal
 $R(\omega_0)$ spaces, where he used the notation $R(n)$ for
 $S(2n-1)$-spaces and $U(n)$ for $S(2n)$-spaces. For the first
 time the notation $S(n)$ for $S(n)$-spaces appeared in 1973 in
 \cite{povo} where the authors extended the definition of
 $S(n)$-spaces to $S(\alpha)$-spaces, where $\alpha$ is any
 ordinal. In that paper Porter and Votaw, among other results,
 characterized the minimal $S(\alpha)$ and $S(\alpha)$-closed
 spaces. In paper \cite{digi} Dikranjan and Giuli  introduced more general notion
$\theta^n$-closure operator and characterized the
$S(n)$-$\theta$-closed spaces.

 In this
paper we continue to study properties of $S(n)$-spaces by applying
the notions of $\theta^n$-closure operators.

In Section 3 we introduce new cardinal functions:
$sL_{\theta(n)}(X)$, $\theta(n)$-quasi-Menger number
$qM_{\theta(n)}(X)$ and $s(n)$-quasi-Menger number $qM_{s(n)}(X)$
in order to extend some known cardinality bounds for Hausdorff and
Urysohn spaces in the case of $S(n)$-spaces [5-8]. In particular
we prove the following:

$\bullet$ For every $S(2n)$-space $X$,  $|X|\leq 2^{sL_{\theta
(n)}(X)\kappa_{\theta (n)}(X)}$ (Theorem \ref{th1}). For $n=1$ we
have Theorem 1 in \cite{alko}.

$\bullet$ For every $S(2n)$-space $X$, $|X|\leq 2^{qM_{\theta
(n)}(X)\kappa_{\theta (n)}(X)}$ (Theorem \ref{th3}). For $n=1$ we
have Theorem 3 in \cite{alko}.

$\bullet$ For every $S(2n)$-space $X$, $|X|\leq 2^{qM_{s
(n)}(X)\kappa_{\theta (n)}(X)}$ (Theorem \ref{th5}).

\section{Main definitions and notation}
\begin{definition} (\cite{digi}). Suppose that $X$ is a topological space,
$M\subset X$, and $x\in X$. For each $n\in \mathbb{N}$, the {\it
$\theta^n$-closure operator} is defined as follows: $x\notin
cl_{\theta^n} M$ if there exists a set of open neighborhoods
$U_1\subset U_2\subset ... U_n$ of the point $x$ such that $cl U_i
\subset U_{i+1}$ for $i =1,2,..., n-1$ and $cl U_n\cap
M=\emptyset$. For $n=0$, we put $cl_{\theta^0} M=cl M$.
\end{definition}

For $n=1$, this definition gives the $\theta$-closure operator
defined by Velichko (\cite{vel}).

A set $M$ is said to be $\theta^n$-closed if $M=cl_{\theta^n}M$.
Similarly the {\it $\theta^n$-interior} of $M$ is defined and
denoted by $Int_{\theta^n} M$, so $Int_{\theta^n}M=X\setminus
cl_{\theta^n}(X\setminus M)$.

For any $n\in \mathbb{N}$, a point $x\in X$ is {\it
$S(n)$-separated} from a subset $M$ if $x\notin cl_{\theta^n}M$.
For example, $x$ is $S(0)$-separated from $M$ if $x\notin
\overline{M}$. For $n>0$ the relation 'being $S(n)$-separated'
between points is symmetric. On the other hand 'being
$S(0)$-separated' can be highly nonsymmetric in non- $T_1$ spaces.
This is why we say that two points $x$ and $y$ are
$S(0)$-separated if $x\notin \{\overline{y}\}$ and $y\notin
\{\overline{x}\}$. Since we are going to consider here only
$T_1$-spaces, for us the $S(0)$-spaces will be exactly the
$T_1$-spaces.

\begin{definition} (\cite{digi}). Let $n$ be a positive integer and $X$ be a space.

$\bullet$ $X$ is an {\it $S(n)$-space} (or $X$ satisfies the {\it
$S(n)$ separation axiom}) if any two different points in $X$ are
$S(n)$-separated;

$\bullet$ an open cover $\{U_{\alpha}\}$ of $X$ is an $S(n)$-cover
if every point of $X$ is in the $\theta^n$-interior of some
$U_{\alpha}$.

\end{definition}

The $S(n)$-spaces coincide with the $\overline{T}_n$-spaces
defined in \cite{vig} and studied further in \cite{povo}, where
$S(\alpha)$-spaces are defined for each ordinal $\alpha$ (see also
\cite{pr2}). The open covers with an $n-1$ chain of shrinkable
refinements ($S(n)$-cover in Dikranjan and Giuli's terminology)
were defined in \cite{povo}.

Obviously, any $S(0)$-space is $T_0$, any $S(1)$-space is
Hausdorff, and any $S(2)$-space is Urysohn.

  In the class of topological $S(n)$-spaces, $S(n)$-closed ($S(n)$-$\theta$-closed)
spaces are defined as $S(n)$-spaces which are closed
(respectively, $\theta$-closed) in any ambient $S(n)$-space.

\begin{definition} (\cite{os2}).  An open set $U$ is called an {\it $n$-hull}  of a
set $A$  if there exists a family of open sets $U_1$, $U_2,...,
U_n=U$ such that $A\subseteq U_1$ and $cl U_i\subseteq U_{i+1}$
for $i=1,...,n-1$.
\end{definition}

By a closed $n$-hull of a set $A$ we mean the closure of any
$n$-hull of $A$.

\bigskip

All necessary definitions related to $S(n)$-spaces can be found in
\cite{digi,os2,os4,os5,povo,str}.

\section{On cardinality bounds for $S(n)$-spaces}

 A.V. Arhangel'skii  introduced the relative cardinal function $sl(Y,X)$ (see \cite{arh}, p.324).
When $Y=X$ we have $sl(X,X)=wL_c(X)$, where $wL_c(X)$ is the weak
Lindel$\ddot{o}$f degree of $X$ with respect to closed sets. The
name and the notation $wL_c(X)$ come from a paper by O. Alas from
1993 (see \cite{al}) but the same cardinal function
(quasi-Lindel$\ddot{o}$f degree of $X$) denoted by $ql(X)$ was
studied by Arhangel'skii in 1979 (\cite{arh79}).

The weak Lindel$\ddot{o}$f degree of $X$ with respect to closed
sets (quasi-Lindel$\ddot{o}$f degree of $X$), denoted below as
$wL_c(X)$, is the smallest infinite cardinal $\kappa$ such that
for every closed subset $H$ of $X$ and every collection
$\mathcal{V}$ of open sets in $X$ that covers $H$, there is a
subcollection $\mathcal{V}_0$ of $\mathcal{V}$ such that
$|\mathcal{V}_0|\leq \kappa$ and $H\subseteq \overline{\bigcup
\mathcal{V}_0}$. In 1979, Arhangel'skii showed that if $X$ is a
regular space, then $|X|\leq 2^{\chi(X)wL_c(X)}$ (see
\cite{arh79,hod}). Later, in 1993, Alas extended this result to
the class of Urysohn spaces (\cite{al,hod}).

 O.T. Alas and Lj.D.R. Ko$\check{c}$inac introduced the following definition.

\begin{definition}(\cite{alko}) For a space $X$, $sL_{\theta}(X)$
is the smallest cardinal $\kappa$ such that if $A\subset X$,
$\mathcal{U}$ is an open collection and
$cl_{\theta}(A)\subset\bigcup \mathcal{U}$, then there is
$\mathcal{V}\subset \mathcal{U}$ with $|\mathcal{V}|\leq \kappa$
and $A\subset\overline{\bigcup \mathcal{V}}$.
\end{definition}

It is immediate that $sL_{\theta}(X)\leq sL(X)$ for every space
$X$.

\begin{definition}(\cite{alko}) For a Hausdorff space $X$, let $\kappa(X)$
be the smallest cardinal $\kappa$ such that for each point $x\in
X$, there is a collection $\mathcal{V}_{x}$ of closed
neighborhoods of $x$ so that $|\mathcal{V}_{x}|\leq \kappa$ and if
$W$ is a closed neighborhood of $x$, then $W$ contains a member of
$\mathcal{V}_{x}$.
\end{definition}

Note that $\kappa(X)\leq\chi(X)$ and $\kappa(X)=\chi(X_s)$ where
$X_s$ is the semiregularization of $X$.

It was shown in (\cite{alko}, Theorem 1) that  $|X|\leq
     2^{sL_{\theta}(X)\kappa(X)}$ whenever $X$ is a Urysohn
space.

We introduce the following definitions.

\begin{definition} For a space $X$ and $n\in \mathbb{N}$, $sL_{\theta(n)}(X)$
is the smallest cardinal $\kappa$ such that if $A\subset X$,
$\mathcal{U}$ is an open collection and
$cl_{\theta^n}(A)\subset\bigcup \mathcal{U}$, then there is
$\mathcal{V}\subset \mathcal{U}$ with $|\mathcal{V}|\leq \kappa$
and $A\subset\overline{\bigcup \mathcal{V}}$.
\end{definition}

It is immediate that $sL_{\theta(n)}(X)\leq
sL_{\theta(n-1)}(X)\leq ...\leq sL_{\theta}(X)\leq sL(X)$ for
every space $X$.

\begin{definition} For a space $X$ and $n\in \mathbb{N}$, let $\kappa_{\theta(n)}(X)$ be the smallest cardinal $\kappa$ such
that for each point $x\in X$, there is a collection
$\mathcal{V}_{x}$ of closed $n$-hulls of $x$ so that
$|\mathcal{V}_{x}|\leq \kappa$ and if $W$ is a closed $n$-hull of
$x$, then $W$ contains a member of $\mathcal{V}_{x}$.
\end{definition}

We need the following lemma.

\begin{lemma}\label{lem1} For a subset $A$ of an $S(2n)$-space $X$, $|cl_{\theta
^n}{A}|\leq|A|^{\kappa_{\theta (n)}(X)}$.
\end{lemma}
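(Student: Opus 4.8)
The plan is to bound $|cl_{\theta^n}A|$ in two stages: first replace $A$ by its $\kappa$-sized subsets, then encode each point of the $\theta^n$-closure of such a small set by a filter trace, using a separation property special to $S(2n)$-spaces. Throughout write $\kappa=\kappa_{\theta (n)}(X)$ and fix, for each $x$, a collection $\mathcal{V}_x$ of closed $n$-hulls of $x$ with $|\mathcal{V}_x|\le\kappa$ that is a base in the sense of the definition of $\kappa_{\theta(n)}(X)$. The first thing I would record is the reformulation of the closure operator: unravelling the definition of $cl_{\theta^n}$ and of a closed $n$-hull, one has $x\in cl_{\theta^n}M$ if and only if every closed $n$-hull of $x$ meets $M$. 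In particular, for $x\in cl_{\theta^n}A$ every member of $\mathcal{V}_x$ meets $A$.

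First stage (cutting $A$ down to size $\kappa$). For $x\in cl_{\theta^n}A$ choose $p(x,V)\in V\cap A$ for each $V\in\mathcal{V}_x$ and set $P_x=\{p(x,V):V\in\mathcal{V}_x\}$, so $P_x\in[A]^{\le\kappa}$. Since $\mathcal{V}_x$ is a base, every closed $n$-hull $H$ of $x$ contains some $V\in\mathcal{V}_x$, and then $p(x,V)\in H\cap P_x$; by the reformulation this yields $x\in cl_{\theta^n}(P_x)$. As $\theta^n$-closure is monotone, $cl_{\theta^n}A=\bigcup\{cl_{\theta^n}S:S\in[A]^{\le\kappa}\}$, and $|[A]^{\le\kappa}|\le|A|^{\kappa}$. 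It therefore suffices to show $|cl_{\theta^n}S|\le 2^{\kappa}$ for each fixed $S\in[A]^{\le\kappa}$.

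Second stage (the separation property, which I expect to be the main obstacle). Here the hypothesis $S(2n)$, rather than $S(n)$, is used. For distinct $x,y$ the axiom gives $x\notin cl_{\theta^{2n}}\{y\}$, i.e. a chain $V_1\subseteq\dots\subseteq V_{2n}$ of neighbourhoods of $x$ with $cl V_i\subseteq V_{i+1}$ and $y\notin cl V_{2n}$. I would split this length-$2n$ chain into halves. The lower half $V_1,\dots,V_n$ exhibits $cl V_n$ as a closed $n$-hull of $x$. For the upper half, put $W_j=X\setminus cl V_{2n-j+1}$ for $j=1,\dots,n$; then $y\in W_1$ (as $y\notin cl V_{2n}$), and from the nesting of the $V_i$ one checks $cl W_j\subseteq X\setminus V_{2n-j+1}\subseteq X\setminus cl V_{2n-j}=W_{j+1}$, so $W_1,\dots,W_n$ is a chain of neighbourhoods of $y$ exhibiting $cl W_n=cl(X\setminus cl V_{n+1})$ as a closed $n$-hull of $y$. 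Finally $cl W_n\subseteq X\setminus V_{n+1}\subseteq X\setminus cl V_n$, so these two closed $n$-hulls are disjoint. The delicate point is precisely this halving and the index bookkeeping, which is where the doubled chain is indispensable.

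Final stage (encoding by a trace filter). Fix $S\in[A]^{\le\kappa}$ and, for $x\in cl_{\theta^n}S$, consider $\mathcal{B}_x=\{V\cap S:V\in\mathcal{V}_x\}$. Each member is nonempty by the reformulation; moreover the intersection of two closed $n$-hulls of $x$ again contains one (intersect the defining chains and apply the base property), so $\mathcal{B}_x$ is downward directed and has the finite intersection property. The map $x\mapsto\mathcal{B}_x$ is injective on $cl_{\theta^n}S$: given $x\ne y$, take the disjoint closed $n$-hulls from the second stage and members $V\in\mathcal{V}_x$, $V'\in\mathcal{V}_y$ lying inside them; then $V\cap S$ and $V'\cap S$ are nonempty and disjoint, so they cannot both belong to one family with the finite intersection property, forcing $\mathcal{B}_x\ne\mathcal{B}_y$. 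Since each $\mathcal{B}_x$ is a family of at most $\kappa$ subsets of a set of size $\le\kappa$, there are at most $(2^{\kappa})^{\kappa}=2^{\kappa}$ of them, whence $|cl_{\theta^n}S|\le 2^{\kappa}$. Combining with the first stage gives $|cl_{\theta^n}A|\le|A|^{\kappa}\cdot 2^{\kappa}=|A|^{\kappa}$, as required.
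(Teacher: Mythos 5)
Your proof is correct and takes essentially the same route as the paper: both arguments select, for each $x\in cl_{\theta^n}A$, a point of $A$ in each member of a $\kappa_{\theta(n)}(X)$-base $\mathcal{V}_x$ of closed $n$-hulls and then encode $x$ by the resulting family of traces, with injectivity resting on the fact (extracted from the $S(2n)$ axiom by splitting a $2n$-chain into two halves, exactly as in your second stage) that distinct points admit disjoint closed $n$-hulls. The differences are only in packaging: the paper maps $cl_{\theta^n}A$ in one step into $exp_{\kappa}(exp_{\kappa}(A))$ and states the injectivity via the identity $\bigcap_{V\in\mathcal{V}_x}V\cap cl_{\theta^n}A_x=\{x\}$, whereas you first decompose $cl_{\theta^n}A$ over $[A]^{\le\kappa}$ and prove injectivity through the finite intersection property of the trace families --- if anything, your version spells out the disjoint-hull separation that the paper's injectivity claim leaves terse.
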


\begin{proof} Let $\kappa=\kappa_{\theta(n)}(X)$ and $\mu=|A|$.
Consider for each point $x\in X$, a collection $\mathcal{V}_{x}$
of closed $n$-hulls of $x$ so that $|\mathcal{V}_{x}|\leq \kappa$
and if $W$ is a closed $n$-hull of $x$, then $W$ contains a member
of $\mathcal{V}_{x}$.

For each  point $x\in cl_{\theta ^n}{A}$ and each $V\in
\mathcal{V}_{x}$ we fix a point $x_V\in A\cap V$. Consider the set
$A_x:=\bigcup\{x_V : V\in \mathcal{V}_{x}\}$. It is clear that
$|A_x|\leq \kappa$ and $A_x\subseteq A$ for each $x\in cl_{\theta
^n}{A}$. Let $\Gamma_x$ be the family $\{V\cap A_x :
V\in\mathcal{V}_{x}\}$. Since $x\in V\cap cl_{\theta ^n}{A_x}$ and
moreover $\bigcap\limits_{V\in \mathcal{V}_{x}} V\cap cl_{\theta
^n}{A_x}\subset \bigcap\limits_{V\in \mathcal{V}_{x}} V\subset
\{x\}$ by the fact that $X$ is an $S(2n)$-space, we have
$\bigcap\limits_{V\in \mathcal{V}_{x}} V\cap cl_{\theta
^n}{A_x}=\{x\}$. This implies that the correspondence $x
\rightarrow \Gamma_x$ defines a one to one map from $cl_{\theta
^n}{A}$ into $exp_{\kappa}(exp_{\kappa}(A))$. As
$|exp_{\kappa}(exp_{\kappa}(A))|\leq
(\mu^{\kappa})^{\kappa}=\mu^{\kappa}$ we have $|cl_{\theta
^n}{A}|\leq \mu^{\kappa}=|A|^{\kappa_{\theta (n)}(X)}$.

\end{proof}

\begin{theorem}\label{th1} For every $S(2n)$-space $X$,
$|X|\leq 2^{sL_{\theta (n)}(X)\kappa_{\theta (n)}(X)}$.

\end{theorem}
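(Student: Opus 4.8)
The plan is to follow the classical Pol–Šapirovskii closure-construction argument, adapted to the $\theta^n$-closure setting. Set $\kappa = sL_{\theta(n)}(X)\,\kappa_{\theta(n)}(X)$. First I would build, by transfinite recursion of length $\kappa^+$, an increasing chain of subsets $\{A_\alpha : \alpha < \kappa^+\}$ of $X$, each of cardinality at most $2^\kappa$, so that at every stage the $\theta^n$-closure of the accumulated set is ``captured'' by small open collections that witness the failure of any point to enter the union. Concretely, at stage $\alpha$ I would set $A_\alpha = cl_{\theta^n}\!\big(\bigcup_{\beta<\alpha} B_\beta\big)$ and then enlarge it by choosing, for every family $\mathcal{U}$ of open sets of size at most $\kappa$ whose union does not cover $X$, a point outside $\overline{\bigcup \mathcal{U}}$; this is the standard ``pick a witness outside each small open cover candidate'' step that forces the final set to be all of $X$.

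The key structural input is Lemma~\ref{lem1}: at each stage the operation $A \mapsto cl_{\theta^n} A$ increases cardinality by at most a $\kappa_{\theta(n)}(X)$-power, so if $|A| \le 2^\kappa$ then $|cl_{\theta^n} A| \le (2^\kappa)^{\kappa_{\theta(n)}(X)} = 2^\kappa$. This keeps every $A_\alpha$ of size at most $2^\kappa$, and since the chain has length $\kappa^+$ with cofinality $\kappa^+ > \kappa$, the union $A = \bigcup_{\alpha<\kappa^+} A_\alpha$ is $\theta^n$-closed: any witness to $x \notin cl_{\theta^n} A$ uses only finitely many neighborhoods, hence is already realized at some stage below $\kappa^+$. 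Counting the families $\mathcal{U}$ of open sets of size at most $\kappa$ needed at each stage is controlled because we only need to consider those arising from the $\le \kappa$-sized closed-$n$-hull bases $\mathcal{V}_x$ guaranteed by $\kappa_{\theta(n)}(X) \le \kappa$, so there are at most $2^\kappa$ such families to process, and $|A| \le 2^\kappa$ is preserved.

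Finally I would argue $A = X$. Suppose not, and pick $x \in X \setminus A$. Since $X$ is an $S(2n)$-space, for each $a \in A$ the points $x$ and $a$ are $S(2n)$-separated, which lets me separate $x$ from the relevant $\theta^n$-closures by a closed $n$-hull; using the $\mathcal{V}_a$ bases I can cover $A$ (which is $\theta^n$-closed, so $cl_{\theta^n} A = A \subset \bigcup \mathcal{U}$ for the chosen open collection $\mathcal{U}$) and invoke the definition of $sL_{\theta(n)}(X) \le \kappa$ to extract a subfamily $\mathcal{V} \subset \mathcal{U}$ with $|\mathcal{V}| \le \kappa$ and $A \subset \overline{\bigcup \mathcal{V}}$. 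By the recursion, this very subfamily was considered at some stage $\alpha < \kappa^+$, at which point a witness point lying outside $\overline{\bigcup \mathcal{V}}$ would have been added to $A_{\alpha+1} \subset A$ whenever $\bigcup \mathcal{V} \ne X$ — forcing $\overline{\bigcup \mathcal{V}} = X \ni x$, contradicting the separation of $x$ from $A$. Hence $A = X$ and $|X| = |A| \le 2^\kappa = 2^{sL_{\theta(n)}(X)\kappa_{\theta(n)}(X)}$.

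The main obstacle I anticipate is getting the separation step to work cleanly in the $S(2n)$ (rather than Urysohn, i.e. $S(2)$) setting: I must verify that $S(2n)$-separation between $x$ and each $a \in A$ produces a closed $n$-hull of $a$ missing a closed $n$-hull of $x$ in a way that the $\theta^n$-closure and the $sL_{\theta(n)}$ bound interact correctly. The finite-chain-of-neighborhoods structure of $cl_{\theta^n}$ is exactly what allows a length-$\kappa^+$ recursion to close off, but matching the two halves of the $2n$ chain — $n$ for the hull around $a$ and $n$ for the hull around $x$ — to the symmetry of $S(2n)$-separation is the delicate bookkeeping that replaces the simpler Urysohn argument of \cite{alko}.
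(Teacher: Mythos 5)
Your overall architecture (the Pol--\v{S}apirovskii closure chain of length $\kappa^+$, Lemma~\ref{lem1} to keep each $|A_\alpha|\le 2^\kappa$, witness points chosen outside $\overline{\bigcup\mathcal{V}}$ for small subfamilies $\mathcal{V}$ of the processed hull-interiors, and the $\le\kappa$-sized closed-$n$-hull bases together with regularity of $\kappa^+$ to make $A=\bigcup_{\alpha<\kappa^+}A_\alpha$ $\theta^n$-closed) matches the paper's. But the final step --- which you yourself flag as ``the delicate bookkeeping'' without carrying it out --- contains a genuine gap, and it is precisely where your route would fail. You propose to use pointwise $S(2n)$-separation of the point $y\in X\setminus A$ from each $a\in A$. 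That gives, for each $a$, a closed $n$-hull $D_a$ of $a$ disjoint from some closed $n$-hull of $y$, but that hull of $y$ (and hence the neighborhood $U^a_1$ of $y$ it provides) depends on $a$. After you extract $\mathcal{V}=\{Int(D_a):a\in B\}$ with $|B|\le\kappa$ and $A\subset\overline{\bigcup\mathcal{V}}$, you must show $\overline{\bigcup\mathcal{V}}\ne X$, which requires a \emph{single} neighborhood of $y$ missing every $D_a$, $a\in B$; the intersection $\bigcap_{a\in B}U^a_1$ of $\kappa$-many neighborhoods is in general not a neighborhood, so the contradiction does not close.

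The paper avoids this by not invoking the $S(2n)$ axiom at that stage at all. Since $A$ is $\theta^n$-closed and $y\notin A$, there is \emph{one} hull $W\in\mathcal{B}_y$, say $W=\overline{U_n}$ with $y\in U_1$ and $\overline{U_i}\subseteq U_{i+1}$, such that $W\cap A=\emptyset$; complementing the chain, $V_1=X\setminus W$, $V_2=X\setminus\overline{U_{n-1}},\dots,V_n=X\setminus\overline{U_1}$, turns the $n$-hull of $y$ inside out into an open $n$-hull $V_n$ of the whole set $A$ satisfying $\overline{V_n}\cap U_1=\emptyset$. Choosing $D_x\in\mathcal{B}_x$ with $D_x\subseteq\overline{V_n}$ for each $x\in A$ forces $\overline{\bigcup\mathcal{V}}\subseteq\overline{V_n}$ for any subfamily, so the fixed neighborhood $U_1$ of $y$ witnesses $\overline{\bigcup\mathcal{V}}\ne X$ uniformly; this chain-complementation trick is the missing idea. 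The $S(2n)$ hypothesis is consumed only inside Lemma~\ref{lem1}, where it guarantees $\bigcap\{V:V\in\mathcal{V}_x\}\subseteq\{x\}$ and hence the injectivity underlying $|cl_{\theta^n}A|\le|A|^{\kappa_{\theta(n)}(X)}$ --- which you do use correctly. (A smaller slip: your justification that $A$ is $\theta^n$-closed, ``any witness to $x\notin cl_{\theta^n}A$ uses only finitely many neighborhoods,'' argues the wrong direction; the correct argument takes $y\in cl_{\theta^n}(A)$, notes that each of the $\le\kappa$ basis hulls in $\mathcal{B}_y$ meets some $A_{\alpha}$, and uses regularity of $\kappa^+$ to bound these stages by some $\psi<\kappa^+$ with $y\in cl_{\theta^n}(A_\psi)\subseteq A$.)
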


\begin{proof} Applying the well-known method of
Pol-$\check{S}$apirovskii-Arhangel'skii-Grizlov \cite{arh,gr,pol},
let $\tau=sL_{\theta (n)}(X)\kappa_{\theta
 (n)}(X)$ and for each $x\in X$  let $\mathcal{B}_{x}$ be a collection of closed
 $n$-hulls
 of $x$ such that  $|\mathcal{B}_{x}|\leq\tau$ and every closed $n$-hull $W$ of $x$ contains a member of $\mathcal{B}_{x}$.

We shall define an increasing sequence
      $\{A_{\alpha}:\alpha\in\tau^{+}\}$ of subsets of $X$ and a
      sequence  $\{\mathcal{U}_{\alpha}:\alpha\in\tau^{+}\}$ of collections
      of open subsets of $X$ such that:

     (1) $|A_{\alpha}|\leq2^{\tau}$ for every $\alpha<\tau^{+}$  and $A_{\alpha}\supset
           cl_{\theta^n}(\bigcup_{\beta<\alpha}A_{\beta})$  for every
           $\alpha<\tau^{+}$;

     (2) $\mathcal{U}_{\alpha}=\{Int(M):M\in\bigcup\{\mathcal{B}_{x}:x\in
            cl_{\theta^n}(\bigcup_{\beta<\alpha}A_{\beta})\}\}$ for every
             $\alpha<\tau^{+}$;

     (3) If $\mathcal{V}\in\lbrack \mathcal{U}_{\alpha}\rbrack ^{\leq\tau}$
     and $\overline{\bigcup \mathcal{V}}\ne
   X$, then  $A_{\alpha+1}\setminus\overline{\bigcup
   \mathcal{V}}\ne\emptyset$ for every
             $\alpha<\tau^{+}$.

\bigskip
Suppose that the sets  $A_{\beta}$ and $\mathcal{U}_{\beta}$,
 satisfying (1)-(3), have been defined for all
 $\beta<\alpha<\tau^+$ and let us define $A_{\alpha}$ and
 $\mathcal{U}_{\alpha}$.

 Note that it follows from Lemma \ref{lem1} that $|cl_{\theta
^n}(\bigcup_{\beta<\alpha}A_{\beta})|\leq|(\bigcup_{\beta<\alpha}A_{\beta})|^{\kappa_{\theta
(n)}(X)}$ and therefore
$|cl_{\theta^n}(\bigcup_{\beta<\alpha}A_{\beta})|\leq 2^{\tau}$.

 For each $\mathcal{V}\in\lbrack
\mathcal{U}_{\alpha}\rbrack ^{\leq\tau}$ such that
$X\setminus\overline{\bigcup \mathcal{V}}\ne\emptyset$, fix a
point $x_{\mathcal{V}}\in X\setminus\overline{\bigcup
\mathcal{V}}$. Let $A_{\alpha}=cl_{\theta^n}(\{x_{\mathcal{V}}:
\mathcal{V}\in [\mathcal{U}_{\alpha}]^{\leq \tau}$, $X\setminus
\overline{\bigcup \mathcal{V}}\neq \emptyset\}\cup
\bigcup_{\beta<\alpha}A_{\beta})$. Then $|A_{\alpha}|\leq
2^{\tau}$.

Finally, let  $A=\bigcup\{A_{\alpha}:\alpha<\tau^{+}\}$. Then
$cl_{\theta^n}(A)=A$. Indeed, let $y\in X$ so that the closure of
each $n$-hull of $y$ intersects $A$; then for each $F\in
\mathcal{B}_{y}$ there is  $\alpha_{F}\leq\tau^+$ so that $F\cap
A_{\alpha_{F}}\ne\emptyset$. Since $|\{\alpha_{F}:F\in
\mathcal{B}_{y}\}|\leq\tau$, there is  $\psi<\tau^+$, so that
$\psi>\alpha_{F}$ for every $F\in \mathcal{B}_{y}$ and  $y\in
cl_{\theta^n}(A_{\psi})\subseteq A$.

Now it is enough to show that  $A=X$. On the contrary, assume that
there is $y\in X\setminus A$. Since $cl_{\theta^n}(A)=A$ there is
$W\in \mathcal{B}_{y}$ so that $W\cap A=\emptyset$. Since $W$ is a
closed $n$-hull of $y$, there exists a family of open sets $U_1$,
$U_2,..., U_n$ such that $y\in U_1$, $\overline{U_i}\subseteq
U_{i+1}$ for $i=1,...,n-1$ and $W=\overline{U_n}$. Let
$V_1=X\setminus W$, $V_2=X\setminus \overline{U_{n-1}}$, ...,
$V_n=X\setminus \overline{U_1}$. Then $V_n$ is an open $n$-hull of
$A$ (hence it is an open $n$-hull for each point $x\in A$) and
$\overline{V_n}\cap U_1=\emptyset$. For each $x\in A$ choose a
closed $n$-hull $D_{x}\in \mathcal{B}_{x}$ so that $D_{x}\subset
\overline{V_n}$.

 Since
$\{Int(D_{x}):x\in A\}$ is an open cover of $A=cl_{\theta^n}(A)$,
there is $B\subset A$, so that $|B|\leq sL_{\theta
(n)}(X)\leq\tau$ and $A\subset\overline{\bigcup\limits_{x\in
B}Int(D_{x})}$. Since $|B|\leq\tau$, there is $\beta<\tau^+$ so
that $B\subset A_{\beta}$ and $\mathcal{V}=\{Int(D_{x}):x\in B\}$
is a convenient collection of open sets with cardinality $\leq
\tau$ which appears at the step $\beta+1$. Hence,
$A_{\beta+1}\setminus\overline{\bigcup \mathcal{V}}\ne\emptyset$
and we have a contradiction with
$A\subset\overline{\bigcup\limits_{x\in B}Int(D_{x})}$.

\end{proof}

\begin{corollary}\label{th101} (Theorem 1 in \cite{alko}) For every Urysohn space $X$,
$|X|\leq 2^{sL_{\theta}(X)\kappa_{\theta}(X)}$.

\end{corollary}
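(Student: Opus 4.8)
The plan is to obtain the Corollary as the case $n=1$ of Theorem~\ref{th1}, so the whole argument reduces to checking that, when $n=1$, the hypothesis and the two cardinal invariants occurring in Theorem~\ref{th1} coincide with those in the Alas--Ko\v{c}inac bound of \cite{alko}. Concretely, I would verify three identifications: that the class of $S(2)$-spaces is exactly the class of Urysohn spaces, that $sL_{\theta(1)}(X)=sL_\theta(X)$, and that $\kappa_{\theta(1)}(X)=\kappa_\theta(X)$.

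First I would treat the separation axiom. Since $cl_{\theta^1}=cl_\theta$, a space is $S(2)$ precisely when every pair of distinct points $x,y$ satisfies $x\notin cl_{\theta^2}\{y\}$. The paper already records that every $S(2)$-space is Urysohn, so I only need the reverse implication in order to apply Theorem~\ref{th1} to a Urysohn space. Given distinct $x,y$ with open neighborhoods $U\ni x$ and $V\ni y$ such that $\overline{U}\cap\overline{V}=\emptyset$, I would set $U_1=U$ and $U_2=X\setminus\overline{V}$. Then $U_1\subseteq U_2$ and $\overline{U_1}=\overline{U}\subseteq U_2$, while $V$ is a neighborhood of $y$ disjoint from $U_2$, so $y\notin\overline{U_2}$; by the definition of the $\theta^2$-closure this gives $x\notin cl_{\theta^2}\{y\}$, and the symmetry of $S(2)$-separation yields $y\notin cl_{\theta^2}\{x\}$. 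Hence every Urysohn space is an $S(2)$-space.

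Next I would unwind the two cardinal functions at $n=1$. In the definition of $sL_{\theta(n)}$ the covering hypothesis is $cl_{\theta^n}(A)\subseteq\bigcup\mathcal{U}$, which for $n=1$ becomes $cl_\theta(A)\subseteq\bigcup\mathcal{U}$, i.e.\ exactly the condition in the Alas--Ko\v{c}inac definition of $sL_\theta$; thus $sL_{\theta(1)}(X)=sL_\theta(X)$. Likewise a $1$-hull of $x$ is simply an open neighborhood of $x$, so a closed $1$-hull is the closure of an open neighborhood, that is, a closed neighborhood; the collections $\mathcal{V}_x$ in the definition of $\kappa_{\theta(1)}$ are therefore collections of closed neighborhoods, and $\kappa_{\theta(1)}(X)$ equals the Alas--Ko\v{c}inac cardinal $\kappa(X)$, written $\kappa_\theta(X)$ in the statement. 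With these identifications in hand, applying Theorem~\ref{th1} with $n=1$ to the Urysohn space $X$ gives $|X|\leq 2^{sL_{\theta(1)}(X)\kappa_{\theta(1)}(X)}=2^{sL_\theta(X)\kappa_\theta(X)}$, as claimed. I expect no genuine obstacle here: the argument is purely a matter of matching definitions, and the only step needing a short verification is the implication Urysohn $\Rightarrow S(2)$, whose nontrivial content is the one-line choice of the nested neighborhoods $U_1,U_2$ above.
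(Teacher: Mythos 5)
Your proposal is correct and takes the same route as the paper, which states the corollary as the immediate $n=1$ specialization of Theorem~\ref{th1}; you merely make explicit the routine checks the paper leaves implicit (Urysohn $\Rightarrow S(2)$, $sL_{\theta(1)}=sL_{\theta}$, $\kappa_{\theta(1)}=\kappa_{\theta}$). The only hair worth splitting is that a closed neighborhood need not itself be a closed $1$-hull (it need not equal the closure of an open set), but since every closed neighborhood $W$ of $x$ contains the closed $1$-hull $\overline{Int\,W}$, the two families are mutually cofinal and the cardinal functions coincide, so your identification stands.
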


\begin{definition} (see \cite{alko} for $n=1$) For a space $X$, the $\theta(n)$-quasi-Menger
number $qM_{\theta(n)}(X)$ is the smallest cardinal number
$\kappa$ such that for every closed subset $A$ of $X$ and every
collection $\{\mathcal{U}_{\alpha}: \alpha\leq \kappa \}$ of
families of open subsets of $X$ with $A\subset
\bigcup_{\alpha<\kappa} (\bigcup \mathcal{U}_{\alpha})$, there are
finite subfamilies $\mathcal{V}_{\alpha}$ of
$\mathcal{U}_{\alpha}$, $\alpha<\kappa$, such that $A\subset
\bigcup_{\alpha<\kappa} cl_{\theta^n}(\bigcup
\mathcal{V}_{\alpha})$.

\end{definition}

It is immediate that $qM_{\theta(n)}(X)\leq
qM_{\theta(n-1)}(X)\leq ...\leq qM_{\theta(1)}(X)=qM_{\theta}(X)$
for every space $X$.

\begin{theorem}\label{th3} For every $S(2n)$-space $X$, $|X|\leq 2^{qM_{\theta (n)}(X)\kappa_{\theta
(n)}(X)}$.
\end{theorem}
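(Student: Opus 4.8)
The plan is to mimic closely the proof of Theorem \ref{th1}, substituting the quasi-Menger condition for the $sL_{\theta(n)}$ condition in the covering step. First I would set $\tau = qM_{\theta(n)}(X)\kappa_{\theta(n)}(X)$ and, for each $x\in X$, fix a collection $\mathcal{B}_x$ of closed $n$-hulls of $x$ with $|\mathcal{B}_x|\le\tau$ such that every closed $n$-hull of $x$ contains a member of $\mathcal{B}_x$. I would then construct, by transfinite recursion on $\alpha<\tau^+$, an increasing family of sets $A_\alpha$ together with the associated open collections $\mathcal{U}_\alpha$, exactly as in conditions (1)--(2) of Theorem \ref{th1}: $A_\alpha \supseteq cl_{\theta^n}(\bigcup_{\beta<\alpha}A_\beta)$, each $|A_\alpha|\le 2^\tau$ (using Lemma \ref{lem1} to bound the $\theta^n$-closure, since $|A|^{\kappa}\le 2^\tau$ when $|A|\le 2^\tau$ and $\kappa\le\tau$), and $\mathcal{U}_\alpha = \{Int(M): M\in\bigcup\{\mathcal{B}_x : x\in cl_{\theta^n}(\bigcup_{\beta<\alpha}A_\beta)\}\}$.

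The key modification is in condition (3), the ``catching'' clause. Instead of choosing one witness point per small subfamily $\mathcal{V}$ whose closure misses $X$, I would index by sequences $\langle\mathcal{V}_\gamma:\gamma<\tau\rangle$ where each $\mathcal{V}_\gamma$ is a \emph{finite} subfamily of $\mathcal{U}_\alpha$. For each such sequence with $X\setminus\bigcup_{\gamma<\tau}cl_{\theta^n}(\bigcup\mathcal{V}_\gamma)\ne\emptyset$, I fix a witness point, and define $A_{\alpha}$ to be the $\theta^n$-closure of the union of all these witnesses together with $\bigcup_{\beta<\alpha}A_\beta$. The number of such sequences is at most $(|\mathcal{U}_\alpha|^{<\omega})^{\tau}\le (2^\tau)^\tau = 2^\tau$, so the cardinality bound $|A_\alpha|\le 2^\tau$ is preserved. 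Setting $A=\bigcup_{\alpha<\tau^+}A_\alpha$, the same cofinality argument as in Theorem \ref{th1} (each $F\in\mathcal{B}_y$ meets some $A_{\alpha_F}$, and $|\mathcal{B}_y|\le\tau<\mathrm{cf}(\tau^+)$) shows $cl_{\theta^n}(A)=A$ and $|A|\le 2^\tau$.

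To finish, I would assume toward a contradiction that some $y\in X\setminus A$ exists. Since $cl_{\theta^n}(A)=A$, some $W\in\mathcal{B}_y$ satisfies $W\cap A=\emptyset$, and exactly as before I produce from the $n$-hull structure of $W$ an open $n$-hull $V_n$ of $A$ with $\overline{V_n}\cap U_1=\emptyset$, and for each $x\in A$ a closed $n$-hull $D_x\in\mathcal{B}_x$ with $D_x\subset\overline{V_n}$. Now $A=cl_{\theta^n}(A)$ is closed and $\{Int(D_x):x\in A\}$ covers $A$; viewing this cover as a $\tau$-indexed collection of open families and invoking $qM_{\theta(n)}(X)\le\tau$, I extract finite subfamilies $\mathcal{V}_\gamma$ with $A\subset\bigcup_{\gamma<\tau}cl_{\theta^n}(\bigcup\mathcal{V}_\gamma)$, where all the $Int(D_x)$ involved come from a set $B$ of size $\le\tau$. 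Then $B\subset A_\beta$ for some $\beta<\tau^+$, so the relevant sequence appears at stage $\beta$ and the construction forces $A_{\beta+1}\setminus\bigcup_{\gamma<\tau}cl_{\theta^n}(\bigcup\mathcal{V}_\gamma)\ne\emptyset$, contradicting the covering just obtained. The main obstacle I anticipate is the bookkeeping in condition (3): correctly phrasing the quasi-Menger data as a $\kappa$-indexed collection of open families so that $qM_{\theta(n)}$ applies, and verifying that the finite subfamilies extracted lie in a subcollection indexed by a set of size $\le\tau$ that is ``caught'' at an earlier stage $\beta$; everything else is a faithful transcription of the proof of Theorem \ref{th1}.
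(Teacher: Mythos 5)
Your transfinite recursion is sound, and in fact your version of clause (3) --- indexed by $\tau$-sequences $\langle\mathcal{V}_\gamma:\gamma<\tau\rangle$ of \emph{finite} subfamilies, with a witness chosen outside $\bigcup_{\gamma<\tau}cl_{\theta^n}(\bigcup\mathcal{V}_\gamma)$ whenever that union is proper --- is exactly the form that the paper's final paragraph actually invokes (the paper's displayed condition (3) is carried over verbatim from Theorem \ref{th1}, with $\overline{\bigcup\mathcal{V}}$, but is then applied in your sequence form); your counting $\bigl((2^\tau)^{<\omega}\bigr)^{\tau}=2^{\tau}$ and the cofinality argument for $cl_{\theta^n}(A)=A$ are also correct. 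The genuine gap is in your closing step. Clause (3) only yields a point of $A_{\beta+1}$ outside $\bigcup_{\gamma<\tau}cl_{\theta^n}(\bigcup\mathcal{V}_\gamma)$ \emph{provided this union is not all of $X$}, and the only candidate witness is $y$. But the covering you transplant from Theorem \ref{th1} gives $\bigcup\mathcal{V}_\gamma\subseteq\overline{V_n}$ with $\overline{V_n}\cap U_1=\emptyset$, and that shows only $y\notin\overline{\bigcup_{\gamma}\bigcup\mathcal{V}_\gamma}$ --- which sufficed in Theorem \ref{th1}, where witnesses are taken outside ordinary closures. Here you must exclude $y$ from each $cl_{\theta^n}(\bigcup\mathcal{V}_\gamma)$, i.e.\ exhibit for each $\gamma$ a closed $n$-hull of $y$ disjoint from $\bigcup\mathcal{V}_\gamma$. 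The hull $W=\overline{U_n}$ does not qualify: $\overline{V_n}$ typically meets $W$ (any point of $\overline{U_n}\setminus\overline{U_1}$ lies in both), and in a non-regular $S(2n)$-space no closed $n$-hull of $y$ inside $U_1$ need exist. So $\bigcup_{\gamma<\tau}cl_{\theta^n}(\bigcup\mathcal{V}_\gamma)$ may well be all of $X$, no witness was ever chosen, and the contradiction evaporates.

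The paper repairs precisely this point with a per-index separation, and this is the one idea in its proof that is genuinely new relative to Theorem \ref{th1}: enumerate $\mathcal{B}_y=\{B_y(\alpha):\alpha<\tau\}$ and, for each $\alpha$, let $\mathcal{W}_\alpha$ be the family of all $W\in\bigcup\{\mathcal{B}_x: x\in cl_{\theta^n}(F)\}$ with $W\cap B_y(\alpha)=\emptyset$. The $S(2n)$ axiom (splitting a $2n$-chain separating $x$ from $y$ into two disjoint closed $n$-hulls, one from $\mathcal{B}_x$ and one from $\mathcal{B}_y$) shows that $\{Int(W): W\in\mathcal{W}_\alpha\}$, $\alpha<\tau$, is a $\tau$-indexed collection of open families covering the closed set $cl_{\theta^n}(F)$; the quasi-Menger property extracts finite $\mathcal{V}_\alpha\subseteq\mathcal{W}_\alpha$, and now $y\notin cl_{\theta^n}(\bigcup\{Int(V):V\in\mathcal{V}_\alpha\})$ is witnessed by $B_y(\alpha)$ itself, since every member of $\mathcal{V}_\alpha$ misses $B_y(\alpha)$. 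Note that this uses the $\tau$-indexed form of the quasi-Menger definition in an essential, index-dependent way: your suggestion of ``viewing'' the single cover $\{Int(D_x):x\in A\}$ as a $\tau$-indexed collection (e.g.\ repeating it) destroys exactly the per-$\alpha$ disjointness from $B_y(\alpha)$ on which the exclusion of $y$ rests.
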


\begin{proof} Let $\kappa=qM_{\theta (n)}(X)\kappa_{\theta
(n)}(X)$ and for each $x\in X$ let $\mathcal{B}_{x}$ be a
collection of closed $n$-hulls of $x$ such that
$|\mathcal{B}_{x}|\leq \kappa$ and every closed $n$-hull of $x$
contains a member of $\mathcal{B}_{x}$. We shall define an
increasing sequence $\{F_{\alpha}: \alpha\in \kappa^+\}$ of
subsets of $X$ and a sequence $\{\mathcal{U}_{\alpha}: \alpha\in
\kappa^+\}$ of collections of open subsets of $X$ satisfying the
following conditions:

     (1) $|F_{\alpha}|\leq2^{\kappa}$ for every  $\alpha<\kappa^{+}$;

     (2) $\mathcal{U}_{\alpha}=\{Int(M):M\in\bigcup\{\mathcal{B}_{x}: x\in
            cl_{\theta^n}(\bigcup_{\beta<\alpha}F_{\beta})\}\}$ for every
             $\alpha<\kappa^{+}$;

     (3) If $\mathcal{V}\in\lbrack \mathcal{U}_{\alpha}\rbrack ^{\leq\kappa}$
     and $\overline{\bigcup \mathcal{V}}\ne
   X$, then  $F_{\alpha}\setminus \overline{\bigcup
   \mathcal{V}}\ne\emptyset$ for every
             $\alpha<\kappa^{+}$.

Suppose $\alpha<\kappa^+$ and the sets $F_{\beta}$ and
$\mathcal{U}_{\beta}$ satisfying (1)-(3) are already defined for
all $\beta<\alpha$. Let us define $F_{\alpha}$ and
$\mathcal{U}_{\alpha}$.

Put $M_{\alpha}=cl_{\theta^n}(\bigcup_{\beta<\alpha} F_{\beta})$.
By Lemma \ref{lem1}, $|M_{\alpha}|\leq 2^{\kappa}$, hence,
$|\mathcal{U}_{\alpha}|\leq 2^{\kappa}$. For every $\mathcal{V}\in
[\mathcal{U}_{\alpha}]^{\leq \kappa}$ such that $\overline{\bigcup
\mathcal{V}}\ne X$ take a point $x_{\mathcal{V}}\in X\setminus
\overline{\bigcup \mathcal{V}}$ and define
$F_{\alpha}:=cl_{\theta^n}(\{x_{\mathcal{V}}: \mathcal{V}\in
[\mathcal{U}_{\alpha}]^{\leq \kappa}$, $X\setminus
\overline{\bigcup \mathcal{V}}\neq \emptyset\}\cup
\bigcup_{\beta<\alpha} F_{\beta})$. Then $|F_{\alpha}|\leq
2^{\kappa}$.

 Let $F=\bigcup \{F_{\alpha} : \alpha<\kappa^+\}$. Then $|F|\leq
 2^{\kappa}$. We claim that $cl_{\theta^n}(F)=X$.
 First, we show that  $cl_{\theta^n}(F)=\bigcup_{\alpha<\kappa^+}
 cl_{\theta^n}(F_{\alpha})$.  Let $x\in cl_{\theta^n}(F)$. The
 closure of every $n$-hull of $x$ intersects $F$, so that for each
 $B\in B_{x}$ one can find some $\alpha_B<\kappa^+$ for which
 $B\cap F_{\alpha_B}\ne\emptyset$. Since $\kappa^+$ is a
 regular cardinal and $|\{\alpha_B : B\in \mathcal{B}_{x}\}|\leq
 \kappa$, there exists $\beta<\kappa^+$ such that $\beta>\alpha_B$
 for every $B\in \mathcal{B}_{x}$ and $x\in
 cl_{\theta^n}(F_{\beta})\subseteq F$.

 We claim that $cl_{\theta^n}(F)=F$. Suppose there is $y\in X\setminus cl_{\theta^n}(F)$. Let
 $\mathcal{B}_{y}=\{B_{y}({\alpha}): \alpha<\kappa \}$.
 For each $\alpha<\kappa$ let $\mathcal{W}_{\alpha}$ be the
 collection of all members $W\in \bigcup\{\mathcal{B}_{x} : x\in
 cl_{\theta^n}(F)\}$ such that $B_{y}({\alpha})\cap
 W=\emptyset$. Since $X$ is an $S(2n)$-space,
 $cl_{\theta^n}(F)\subset \bigcup_{\alpha<\kappa} \bigcup\{Int(W):
 W\in \mathcal{W}_{\alpha}\}$. As $cl_{\theta^n}(F)$ is $\theta^n$-closed,
 one can choose $\mathcal{V}_{\alpha}\in
 [\mathcal{W}_{\alpha}]^{<\omega}$ for each $\alpha\in \kappa$
 such that $cl_{\theta^n}(F)\subset \bigcup_{\alpha<\kappa}
 cl_{\theta^n}(\bigcup \{Int(V) : V\in \mathcal{V}_{\alpha}\})$.
 Clearly, for every $\alpha< \kappa$, $\bigcup \{Int(V): V\in
 \mathcal{V}_{\alpha}\}\cap B_{y}({\alpha})=\emptyset$,
 hence, $y\notin cl_{\theta^n}(\bigcup\{Int(V): V\in
 \mathcal{V}_{\alpha}\})$. This means $y\notin \bigcup_{\alpha<\kappa} cl_{\theta^n}(\bigcup\{Int(V): V\in
 \mathcal{V}_{\alpha}\})$. There is a $\beta<\kappa^+$ such that
 all $\mathcal{V}_{\alpha}$, $\alpha<\kappa$, are contained in
 $\mathcal{U}_{\beta}$. Then by (3), $F_{\beta+1}\setminus \bigcup_{\alpha<\kappa} cl_{\theta^n}(\bigcup\{Int(V): V\in
 \mathcal{V}_{\alpha}\})\ne \emptyset$ which is a contradiction.

\end{proof}

\begin{corollary}\label{th301} $($Theorem 3 in \cite{alko}$)$ For every Urysohn space $X$, $|X|\leq 2^{qM_{\theta}(X)\kappa_{\theta
}(X)}$.
\end{corollary}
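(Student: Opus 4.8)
The plan is to obtain this corollary as the special case $n=1$ of Theorem~\ref{th3}, so that essentially no new argument is required beyond identifying the relevant objects when $n=1$.

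First I would verify that the hypothesis of Theorem~\ref{th3} is met, namely that a Urysohn space is exactly an $S(2)$-space (this is the $S(2n)$ assumption for $n=1$). The text already records that every $S(2)$-space is Urysohn; for the converse, given distinct points $x,y$ of a Urysohn space I would choose open sets $U\ni x$ and $V\ni y$ with $cl\,U\cap cl\,V=\emptyset$ and set $U_1=U$, $U_2=X\setminus cl\,V$. Then $cl\,U_1=cl\,U\subseteq U_2$, and since the open neighbourhood $V$ of $y$ is disjoint from $U_2$ one gets $y\notin cl\,U_2$; hence $x\notin cl_{\theta^2}\{y\}$, and by symmetry $X$ is an $S(2)$-space. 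Thus the classes of Urysohn spaces and of $S(2)$-spaces coincide, and Theorem~\ref{th3} applies to $X$ with $n=1$.

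Next I would match the cardinal functions at $n=1$. By definition $cl_{\theta^1}=cl_{\theta}$ is Velichko's $\theta$-closure, and a closed $1$-hull of a point $x$ is precisely the closure of an open neighbourhood of $x$; reading the definitions in the case $n=1$ therefore gives $\kappa_{\theta(1)}(X)=\kappa_{\theta}(X)$, while the remark following the definition of $qM_{\theta(n)}$ records $qM_{\theta(1)}(X)=qM_{\theta}(X)$. Substituting these equalities into the conclusion of Theorem~\ref{th3} yields
$|X|\le 2^{qM_{\theta(1)}(X)\kappa_{\theta(1)}(X)}=2^{qM_{\theta}(X)\kappa_{\theta}(X)}$,
which is the asserted bound.

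The only step demanding any care is the identification of Urysohn spaces with $S(2)$-spaces, carried out above by a short computation with complements of closures; everything else is a direct transcription of the $n=1$ instances of the definitions, so I anticipate no genuine obstacle.
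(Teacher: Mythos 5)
Your proposal is correct and matches the paper's own route: Corollary~\ref{th301} is stated there precisely as the $n=1$ instance of Theorem~\ref{th3}, with $qM_{\theta(1)}=qM_{\theta}$ recorded in the text and $\kappa_{\theta(1)}$ reducing to the Alas--Ko\v{c}inac $\kappa$ since closed $1$-hulls are closures of open neighbourhoods. Your only added content, the verification that Urysohn spaces coincide with $S(2)$-spaces (via $U_1=U$, $U_2=X\setminus cl\,V$), is a correct filling-in of a standard fact the paper leaves implicit.
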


\begin{remark}
Similarly, we can define the $\theta_0^n$-closure operator and
obtain similar cardinality bounds for $S(2n-1)$-spaces.
\end{remark}

\begin{definition} Suppose that $X$ is a topological space,
$M\subset X$, and $x\in X$. For each $n\in \mathbb{N}$, the {\it
$\theta_0^n$-closure operator} is defined as follows: $x\notin
cl_{\theta_0^n} M$ if there exists a set of open neighborhoods
$U_1\subset U_2\subset ... U_n$ of the point $x$ such that $cl U_i
\subset U_{i+1}$ for $i =1,2,..., n-1$ and $U_n\bigcap
M=\emptyset$.
\end{definition}

\begin{definition} For a space $X$ and $n\in \mathbb{N}$, $sL_{\theta_0(n)}(X)$
is the smallest cardinal $\kappa$ such that if $A$ is a
$\theta_0^n$-closed subset of $X$ and $\mathcal{U}$ is an open
cover of $A$, then there is $\mathcal{V}\subset \mathcal{U}$ with
$|\mathcal{V}|\leq \kappa$ and $A\subset\overline{\bigcup
\mathcal{V}}$.
\end{definition}

\begin{definition} For a space $X$ and $n\in \mathbb{N}$, let $\kappa_{\theta_0(n)}(X)$ be the smallest cardinal $\kappa$ such
that for each point $x\in X$, there is a collection
$\mathcal{V}_{x}$ of $n$-hulls of $x$ so that
$|\mathcal{V}_{x}|\leq \kappa$ and if $W$ is a $n$-hull of $x$,
then $W$ contains a member of $\mathcal{V}_{x}$.
\end{definition}

\begin{lemma} For a subset $A$ of an $S(2n-1)$-space $X$,
$|cl_{\theta_0 ^n}{A}|\leq|A|^{\kappa_{\theta_0 (n)}(X)}$.
\end{lemma}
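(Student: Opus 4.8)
The plan is to mimic the proof of Lemma \ref{lem1} almost verbatim, replacing the $\theta^n$-closure by the $\theta_0^n$-closure, the closed $n$-hulls by $n$-hulls (as in Definition of $\kappa_{\theta_0(n)}$), and the $S(2n)$-separation property by the $S(2n-1)$-separation property. Set $\kappa=\kappa_{\theta_0(n)}(X)$ and $\mu=|A|$. For each point $x\in X$ fix a collection $\mathcal{V}_x$ of $n$-hulls of $x$ with $|\mathcal{V}_x|\le\kappa$ such that every $n$-hull of $x$ contains a member of $\mathcal{V}_x$.

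First I would observe that if $x\in cl_{\theta_0^n}A$, then for every $n$-hull $U$ of $x$ we have $U\cap A\ne\emptyset$ (this is precisely the negation of the defining condition $U_n\cap M=\emptyset$ for $\theta_0^n$-closure). In particular, for each $V\in\mathcal{V}_x$ one may fix a point $x_V\in A\cap V$, and set $A_x:=\{x_V:V\in\mathcal{V}_x\}\subseteq A$ with $|A_x|\le\kappa$. Let $\Gamma_x$ be the family $\{V\cap A_x:V\in\mathcal{V}_x\}$.

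The crucial step is to verify that the assignment $x\mapsto\Gamma_x$ is injective, and here is where the $S(2n-1)$ hypothesis enters in place of $S(2n)$. Since $x\in V$ for every $V\in\mathcal{V}_x$ and each $x_V\in V$, the point $x$ lies in $\bigcap_{V\in\mathcal{V}_x}V\cap cl_{\theta_0^n}A_x$; the point of the argument is that this intersection reduces to $\{x\}$. The arithmetic matching $n$-hulls with $\theta_0^n$-separation should give, for two distinct points $x\ne y$, that $x$ being $S(2n-1)$-separated from $y$ forces some member of $\Gamma_x$ to distinguish $x$ from $y$ (two chains of $n$ open sets, one around $x$ and one around $y$, glue into a chain of length $2n-1$ witnessing separation). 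I expect this counting — confirming that $n$ plus the closure operator $cl_{\theta_0^n}$ yields exactly the $2n-1$ appearing in the separation axiom, rather than the $2n$ of Lemma \ref{lem1} — to be the main obstacle, and the place demanding the most care.

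Once injectivity is established, the cardinality estimate is purely mechanical and identical to Lemma \ref{lem1}: the map sends $cl_{\theta_0^n}A$ into $exp_\kappa(exp_\kappa(A))$, whose cardinality is at most $(\mu^\kappa)^\kappa=\mu^\kappa$, so $|cl_{\theta_0^n}A|\le\mu^\kappa=|A|^{\kappa_{\theta_0(n)}(X)}$, completing the proof. The only genuine difference from Lemma \ref{lem1} is the bookkeeping with the open $n$-hulls (rather than closed ones) and the shift from $S(2n)$ to $S(2n-1)$; everything else transfers directly.
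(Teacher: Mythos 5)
Your proposal is correct and is essentially the paper's own argument: the paper gives no separate proof of this lemma, presenting it (after the preceding Remark) as the routine $\theta_0^n$-analogue of Lemma \ref{lem1}, which is precisely the adaptation you describe. The one step you left as an expectation does check out: given $y\neq x$, $S(2n-1)$-separation yields open sets $U_1\subset\dots\subset U_{2n-1}$ about $x$ with $cl\, U_i\subset U_{i+1}$ and $y\notin cl\, U_{2n-1}$; taking $V\in\mathcal{V}_x$ with $V\subseteq U_n$ and observing that $W_i:=X\setminus cl\, U_{2n-i}$, $i=1,\dots,n$, forms a chain making $W_n=X\setminus cl\, U_n$ an open $n$-hull of $y$ disjoint from $cl\, U_n\supseteq V\cap A_x$, one gets $y\notin cl_{\theta_0^n}(V\cap A_x)$ — so the shift from $2n$ to $2n-1$ works exactly as you predicted, because $\theta_0^n$-membership is tested by open $n$-hulls rather than by their closures.
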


\begin{theorem}\label{th11}  If $X$ is an $S(2n-1)$-space, then
$|X|\leq 2^{sL_{\theta_0 (n)}(X)\kappa_{\theta_0 (n)}(X)}$.
\end{theorem}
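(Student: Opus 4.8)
The plan is to run the Pol--\v{S}apirovskii--Arhangel'skii--Grizlov closing-off argument exactly as in the proof of Theorem \ref{th1}, but with $cl_{\theta^n}$ replaced by $cl_{\theta_0^n}$, closed $n$-hulls replaced by (open) $n$-hulls, and Lemma \ref{lem1} replaced by the Lemma preceding this theorem. Set $\tau=sL_{\theta_0(n)}(X)\kappa_{\theta_0(n)}(X)$ and fix, for each $x\in X$, a collection $\mathcal{B}_{x}$ of $n$-hulls of $x$ with $|\mathcal{B}_{x}|\le\tau$ such that every $n$-hull of $x$ contains a member of $\mathcal{B}_{x}$. First I would build an increasing chain $\{A_{\alpha}:\alpha<\tau^{+}\}$ together with $\mathcal{U}_{\alpha}=\bigcup\{\mathcal{B}_{x}:x\in cl_{\theta_0^n}(\bigcup_{\beta<\alpha}A_{\beta})\}$ satisfying the analogues of (1)--(3): $|A_{\alpha}|\le 2^{\tau}$ and $A_{\alpha}\supseteq cl_{\theta_0^n}(\bigcup_{\beta<\alpha}A_{\beta})$, where the estimate $|cl_{\theta_0^n}(\bigcup_{\beta<\alpha}A_{\beta})|\le 2^{\tau}$ comes from the preceding Lemma, and the catching condition that if $\mathcal{V}\in[\mathcal{U}_{\alpha}]^{\le\tau}$ and $\overline{\bigcup\mathcal{V}}\ne X$ then $A_{\alpha+1}\setminus\overline{\bigcup\mathcal{V}}\ne\emptyset$. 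Putting $A=\bigcup_{\alpha<\tau^{+}}A_{\alpha}$, the regularity of $\tau^{+}$ together with the fact that $y\in cl_{\theta_0^n}(A)$ iff every $n$-hull of $y$ meets $A$ gives $cl_{\theta_0^n}(A)=A$, so it remains to prove $A=X$.

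For the final step I would argue by contradiction: suppose $y\in X\setminus A$. Since $A$ is $\theta_0^n$-closed there is an $n$-hull $W=U_{n}\in\mathcal{B}_{y}$, with witnessing chain $y\in U_{1}$, $\overline{U_{i}}\subseteq U_{i+1}$ for $i=1,\dots,n-1$, such that $U_{n}\cap A=\emptyset$. Dualising the chain as in Theorem \ref{th1} I would set $V_{j}=X\setminus\overline{U_{n-j}}$; because $\overline{U_{n-1}}\subseteq U_{n}$ and $A\cap U_{n}=\emptyset$ one gets $A\subseteq V_{1}=X\setminus\overline{U_{n-1}}$, and $V_{1}\subseteq\cdots\subseteq V_{n-1}=X\setminus\overline{U_{1}}$ is a chain with $\overline{V_{j}}\subseteq V_{j+1}$ and $\overline{V_{n-1}}\cap U_{1}=\emptyset$. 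The goal is then to cover $A$ by members $D_{x}\in\mathcal{B}_{x}$ lying in $X\setminus\overline{U_{1}}$, to apply $sL_{\theta_0(n)}(X)$ so as to extract $B\subseteq A$ with $|B|\le\tau$ and $A\subseteq\overline{\bigcup_{x\in B}D_{x}}\subseteq X\setminus U_{1}\ne X$, and to conclude, after locating a $\beta<\tau^{+}$ with $B\subseteq A_{\beta}$ and $\{D_{x}:x\in B\}\subseteq\mathcal{U}_{\beta}$, the contradiction $A_{\beta+1}\subseteq\overline{\bigcup\mathcal{V}}$ against the catching condition.

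The main obstacle is precisely the construction of these $D_{x}$, and here the $S(2n-1)$ setting behaves strictly worse than the $S(2n)$ setting of Theorem \ref{th1}. There $W$ was a closed $n$-hull, $X\setminus W$ was open, and the dual chain had the full $n$ levels, producing in one stroke an open $n$-hull of $A$ whose closure misses $U_{1}$. For $\theta_0^n$ the hull $W=U_{n}$ is open, $X\setminus U_{n}$ is only closed, and the dual chain $V_{1}\subseteq\cdots\subseteq V_{n-1}$ has just $n-1$ levels: it exhibits $A$ inside an open $(n-1)$-hull, not an $n$-hull. Thus for each $x\in A$ I must still manufacture the innermost, $n$-th, level, namely an $n$-hull $D_{x}\in\mathcal{B}_{x}$ of $x$ contained in $X\setminus\overline{U_{1}}$, and this is exactly where I expect to spend the $S(2n-1)$ separation. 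Since $x\notin U_{n}\supseteq\overline{U_{n-1}}$, the axiom furnishes a chain of length $2n-1$ separating $x$ from $y$, and the same mechanism exploited in the preceding Lemma --- where $S(2n-1)$ forces the $n$-hulls of a point to intersect in that point --- should, inside the dual $(n-1)$-hull $V_{n-1}=X\setminus\overline{U_{1}}$, provide an $n$-hull $D_{x}$ of $x$ with $D_{x}\cap U_{1}=\emptyset$. Checking that this extra innermost level can be selected uniformly enough over $x\in A$ to keep $\overline{\bigcup_{x\in B}D_{x}}$ off the fixed neighbourhood $U_{1}$ of $y$ --- equivalently, upgrading the $(n-1)$-hull dual to pointwise $n$-hulls without sacrificing the common separation from $y$ --- is the delicate part of the proof and the step I would scrutinise most carefully.
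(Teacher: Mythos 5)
Your reconstruction of the closing-off scheme is faithful to the even-case argument, and the obstacle you flag at the end is not a loose end you merely failed to check: it is a genuine gap, and the mechanism you propose for closing it cannot work. Note first that the ``uniformity over $x\in A$'' you worry about is free: since $U_1$ is open, any union of sets disjoint from $U_1$ has closure disjoint from $U_1$, so $\overline{\bigcup\mathcal{V}}\neq X$ follows automatically once each $D_x\cap U_1=\emptyset$. The real problem is existence for a \emph{single} $x$. A member $D_x\in\mathcal{B}_x$ with $D_x\cap U_1=\emptyset$ exists iff some $n$-hull of $x$ misses $U_1$; and since $U_1$ is open, $W_n\cap U_1=\emptyset$ is equivalent to $\overline{W_n}\cap U_1=\emptyset$, so what you need is $x\notin cl_{\theta^n}(U_1)$ --- a full $2n$-layer separation of $x$ from the open set $U_1$, not the $(2n-1)$-layer separation of $x$ from the point $y$ that the $S(2n-1)$ axiom supplies. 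The chain witnessing $x\notin cl_{\theta_0^n}\{y\}$ avoids $y$ but may meet $U_1$, and intersecting it with your dual chain $V_1\subseteq\cdots\subseteq V_{n-1}$ breaks at the innermost level, because $\overline{V_{n-1}}\subseteq X\setminus U_1$ but not $\overline{V_{n-1}}\subseteq X\setminus\overline{U_1}$, so the combined family is not a hull chain. Exactly as you computed, the dual of an open $n$-hull has only $n-1$ levels, and $S(2n-1)$ alone does not manufacture the missing one.

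There is also nothing in the paper to compare this against: Theorem \ref{th11}, like the Lemma preceding it and Theorem \ref{th33}, is stated without proof and justified only by the remark that the $\theta_0^n$ case is ``similar''. Your analysis shows the analogy genuinely fails, and the case $n=1$ makes this vivid: there $cl_{\theta_0^1}=cl$, a $1$-hull is just an open neighborhood, so $\kappa_{\theta_0(1)}(X)=\chi(X)$ and $sL_{\theta_0(1)}(X)=wL_c(X)$, and Theorem \ref{th11} asserts $|X|\leq 2^{wL_c(X)\chi(X)}$ for every Hausdorff space --- precisely the question left open after Arhangel'skii's regular case and Alas's Urysohn case (see the discussion in \cite{bc,hod}); your gap at $n=1$ is exactly the failure of the implication $x\notin U_1\Rightarrow x\notin\overline{U_1}$ in non-regular Hausdorff spaces. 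So the missing step is not a routine verification. One can salvage an odd-case theorem by changing the invariant: pair the open $n$-hull at $y$ with filterbases of \emph{closed $(n-1)$-hulls} at the points of $A$ (in an $S(2n-1)$-space these intersect in the point, and $\overline{V_{n-1}}$ is a closed $(n-1)$-hull of each $x\in A$, so it does contain a basic member; compare the invariants $\chi_{2n-1}$ of \cite{gk}) --- but that proves a different statement, with the closing-off step needing corresponding adjustment, not Theorem \ref{th11} as written with $\kappa_{\theta_0(n)}$. As it stands, your proposal --- like the paper's remark --- does not constitute a proof.
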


\begin{definition} For a space $X$, the $\theta_0(n)$-quasi-Menger
number $qM_{\theta_0(n)}(X)$ is the smallest cardinal number
$\kappa$ such that for every closed subset $A$ of $X$ and every
collection $\{\mathcal{U}_{\alpha}: \alpha\leq \kappa \}$ of
families of open subsets of $X$ with $A\subset
\bigcup_{\alpha<\kappa} (\bigcup \mathcal{U}_{\alpha})$, there are
finite subfamilies $\mathcal{V}_{\alpha}$ of
$\mathcal{U}_{\alpha}$, $\alpha<\kappa$, such that $A\subset
\bigcup_{\alpha<\kappa} cl_{\theta_0^n}(\bigcup
\mathcal{V}_{\alpha})$.

\end{definition}

\begin{theorem}\label{th33} For every $S(2n-1)$-space $X$, $|X|\leq 2^{qM_{\theta_0
(n)}(X)\kappa_{\theta_0 (n)}(X)}$.
\end{theorem}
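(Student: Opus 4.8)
The plan is to run the same transfinite closing-off scheme as in the proof of Theorem \ref{th3}, replacing throughout $cl_{\theta^n}$ by $cl_{\theta_0^n}$, closed $n$-hulls by $n$-hulls (which are already open), $\kappa_{\theta(n)}$ by $\kappa_{\theta_0(n)}$, $qM_{\theta(n)}$ by $qM_{\theta_0(n)}$, and the $S(2n)$ axiom by the $S(2n-1)$ axiom. I set $\kappa=qM_{\theta_0(n)}(X)\kappa_{\theta_0(n)}(X)$ and, for each $x\in X$, fix a family $\mathcal{B}_x$ of $n$-hulls of $x$ with $|\mathcal{B}_x|\le\kappa$ such that every $n$-hull of $x$ contains a member of $\mathcal{B}_x$. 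Then I build an increasing chain $\{F_\alpha:\alpha<\kappa^+\}$ of subsets together with open families $\{\mathcal{U}_\alpha:\alpha<\kappa^+\}$ satisfying the evident analogues of (1)--(3): $|F_\alpha|\le 2^\kappa$; $\mathcal{U}_\alpha=\bigcup\{\mathcal{B}_x:x\in cl_{\theta_0^n}(\bigcup_{\beta<\alpha}F_\beta)\}$ (no interior operator is needed, since $n$-hulls are open); and, for each $\mathcal{V}\in[\mathcal{U}_\alpha]^{\le\kappa}$ whose $\theta_0^n$-closure misses a point, a witness point is absorbed into $F_\alpha$. The size estimate $|F_\alpha|\le 2^\kappa$ (hence $|\mathcal{U}_\alpha|\le 2^\kappa$) is controlled at every stage by the $\theta_0^n$-analogue of Lemma \ref{lem1}, i.e. the (unnumbered) lemma preceding Theorem \ref{th11}, which gives $|cl_{\theta_0^n}(\bigcup_{\beta<\alpha}F_\beta)|\le(2^\kappa)^{\kappa}=2^\kappa$.

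Putting $F=\bigcup_{\alpha<\kappa^+}F_\alpha$, I would first record $|F|\le 2^\kappa$ and $F=cl_{\theta_0^n}(F)$. The equality is the usual reflection step: if $x\in cl_{\theta_0^n}(F)$ then the closure of every $n$-hull of $x$ meets $F$, so for each $B\in\mathcal{B}_x$ some stage $\alpha_B<\kappa^+$ has $B\cap F_{\alpha_B}\ne\emptyset$; since $|\mathcal{B}_x|\le\kappa$ and $\kappa^+$ is regular these stages are bounded by some $\psi<\kappa^+$, so $x\in cl_{\theta_0^n}(F_\psi)\subseteq F$. In particular $F$ is $\theta_0^n$-closed, and hence closed: any chain $U_1\subseteq\dots\subseteq U_n$ witnessing $x\notin cl_{\theta_0^n}(M)$ has $U_1\subseteq U_n$ with $U_n\cap M=\emptyset$, so $U_1$ is a neighbourhood of $x$ missing $M$ and $cl(M)\subseteq cl_{\theta_0^n}(M)$. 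This closedness is exactly what will let me feed $F$ into $qM_{\theta_0(n)}$.

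The remaining and, I expect, the only genuinely new task is to prove $F=X$. Suppose not and fix $y\in X\setminus F=X\setminus cl_{\theta_0^n}(F)$; enumerate $\mathcal{B}_y=\{B_y(\alpha):\alpha<\kappa\}$ and set $\mathcal{W}_\alpha=\{W\in\bigcup\{\mathcal{B}_x:x\in cl_{\theta_0^n}(F)\}:B_y(\alpha)\cap W=\emptyset\}$. The decisive point, and the step I expect to be the main obstacle, is the covering $cl_{\theta_0^n}(F)\subseteq\bigcup_{\alpha<\kappa}\bigcup\mathcal{W}_\alpha$, which here plays the role that disjoint closed $n$-hulls played for $S(2n)$-spaces. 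Its engine is the separation fact that in an $S(2n-1)$-space any two distinct points admit disjoint $n$-hulls. I would prove this by unwinding the $S(2n-1)$-separation of $x\in cl_{\theta_0^n}(F)$ from $y$ (note $x\ne y$ since $x\in F$, $y\notin F$) into an open chain $G_1\subset G_2\subset\dots\subset G_{2n-1}$ with $x\in G_1$, $cl(G_i)\subseteq G_{i+1}$ and $y\notin cl(G_{2n-1})$; then $G_n$ is an $n$-hull of $x$ via the subchain $G_1,\dots,G_n$, while $X\setminus cl(G_n)$ is an $n$-hull of $y$ via the chain $X\setminus cl(G_{2n-1})\subseteq\dots\subseteq X\setminus cl(G_n)$, and the two are disjoint because $G_n\subseteq cl(G_n)$. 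Shrinking to members of $\mathcal{B}_x$ inside $G_n$ and of $\mathcal{B}_y$ inside $X\setminus cl(G_n)$ puts $x$ into some $W\in\mathcal{W}_\alpha$, which yields the covering.

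With the covering in hand I apply $qM_{\theta_0(n)}(X)\le\kappa$ to the closed set $cl_{\theta_0^n}(F)$ and the $\kappa$-indexed collection $\{\mathcal{W}_\alpha:\alpha<\kappa\}$ of open families, obtaining finite $\mathcal{V}_\alpha\subseteq\mathcal{W}_\alpha$ with $cl_{\theta_0^n}(F)\subseteq\bigcup_{\alpha<\kappa}cl_{\theta_0^n}(\bigcup\mathcal{V}_\alpha)$. For each $\alpha$ every member of $\mathcal{V}_\alpha$ is disjoint from the $n$-hull $B_y(\alpha)$ of $y$, so $y\notin cl_{\theta_0^n}(\bigcup\mathcal{V}_\alpha)$ and $y$ misses the whole union. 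Choosing $\beta<\kappa^+$ large enough that all the (finitely many over $\kappa$ indices) members of the $\mathcal{V}_\alpha$ lie in $\mathcal{U}_\beta$ — possible because each indexing point entered $cl_{\theta_0^n}(F_\gamma)$ at some $\gamma<\kappa^+$ and $\kappa^+$ is regular — condition (3) forces a point of $F_{\beta+1}$ outside $\bigcup_{\alpha<\kappa}cl_{\theta_0^n}(\bigcup\mathcal{V}_\alpha)$, contradicting $F\subseteq cl_{\theta_0^n}(F)\subseteq\bigcup_{\alpha<\kappa}cl_{\theta_0^n}(\bigcup\mathcal{V}_\alpha)$. Hence $F=X$ and $|X|=|F|\le 2^\kappa=2^{qM_{\theta_0(n)}(X)\kappa_{\theta_0(n)}(X)}$. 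The separation fact above is the one substantive novelty, and the same identity $\bigcap\{W:W \text{ an } n\text{-hull of } x\}=\{x\}$ underlies the unnumbered $\theta_0^n$-cardinality lemma.
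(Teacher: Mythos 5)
Your proof is correct and is precisely the route the paper intends: the paper omits the proof of Theorem \ref{th33}, remarking only that the $\theta_0^n$-analogues of Lemma \ref{lem1} and Theorem \ref{th3} go through, and your adaptation supplies the one genuinely new ingredient --- that in an $S(2n-1)$-space two distinct points admit disjoint $n$-hulls (your chain $G_1\subset\dots\subset G_{2n-1}$ argument, with $G_n$ a hull of $x$ and $X\setminus cl(G_n)$ a hull of $y$), which is exactly what replaces the disjoint closed $n$-hulls used for $S(2n)$-spaces, and whose openness correctly eliminates the $Int(\cdot)$ operator from the covering step. The one imprecision in your write-up, the formulation of condition (3) (the witness points must be chosen against $\kappa$-sequences of finite subfamilies, so as to avoid $\bigcup_{\alpha<\kappa}cl_{\theta_0^n}(\bigcup\mathcal{V}_\alpha)$ when that union is proper, rather than merely against a single family $\mathcal{V}$ of size $\le\kappa$), is inherited verbatim from the paper's own proof of Theorem \ref{th3} and is repaired identically in both settings without affecting the bound $|F_\alpha|\le 2^{\kappa}$.
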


\bigskip

 In \cite{str}, L. Stramaccia defined the notion of $S(n)$-set.

\begin{definition} (\cite{str}) Let $X$ be a topological space and
$M$ be a subset of $X$.

$\bullet$ A cover $\mathcal{U}=\{U_{\alpha}: \alpha\in \Lambda\}$
of $M$ by open sets of $X$, is an
 {\it $S(n)$-cover with respect to $M$}, if
 $M\subset\bigcup\{Int_{\theta^n}U_{\alpha}: \alpha\in \Lambda\}$.

$\bullet$ $M$ is an  {\it $S(n)$-set} of $X$ if every $S(n)$-cover
with respect to $M$ has a finite subcover.

\end{definition}

\begin{definition} For a space $X$ and $n\in \mathbb{N}$, {\it $sL_{s(n)}(X)$}
is the smallest cardinal $\kappa$ such that if $A$ is a
$\theta^n$-closed subset of $X$ and $\mathcal{U}$ is an
$S(n)$-cover with respect to $A$, then there is
$\mathcal{V}\subset \mathcal{U}$ with $|\mathcal{V}|\leq \kappa$
and $A\subset\overline{\bigcup \mathcal{V}}$.
\end{definition}

\bigskip

Note that $sL_{s(n)}(X)\leq sL_{\theta(n)}(X)$ for every $n\in
\mathbb{N}$.

\bigskip
\begin{theorem}\label{th5} For every $S(2n)$-space $X$,
 $|X|\leq
2^{sL_{s(n)}(X)\kappa_{\theta (n)}(X)}$.
\end{theorem}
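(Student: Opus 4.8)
The plan is to run the Pol--$\check{S}$apirovskii--Arhangel'skii--Grizlov closing-off scheme exactly as in Theorems \ref{th1} and \ref{th3}, the only genuinely new point being that the covering family produced at the end must be an \emph{$S(n)$-cover}, so that the smaller invariant $sL_{s(n)}(X)$ can be invoked in place of $sL_{\theta(n)}(X)$. First I would put $\tau=sL_{s(n)}(X)\kappa_{\theta(n)}(X)$ and fix, for each $x\in X$, a family $\mathcal{B}_x$ of closed $n$-hulls of $x$ with $|\mathcal{B}_x|\le\tau$ such that every closed $n$-hull of $x$ contains a member of $\mathcal{B}_x$. Then I would build an increasing chain $\{F_\alpha:\alpha<\tau^+\}$ together with $\mathcal{U}_\alpha=\{Int(M):M\in\bigcup\{\mathcal{B}_x:x\in cl_{\theta^n}(\bigcup_{\beta<\alpha}F_\beta)\}\}$, closing off against all $\mathcal{V}\in[\mathcal{U}_\alpha]^{\le\tau}$ with $\overline{\bigcup\mathcal{V}}\ne X$ by throwing in a witness outside $\overline{\bigcup\mathcal{V}}$ and taking the $\theta^n$-closure. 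Lemma \ref{lem1} keeps $|F_\alpha|\le 2^\tau$, and regularity of $\tau^+$ gives that $A=\bigcup_{\alpha<\tau^+}F_\alpha$ is $\theta^n$-closed, precisely as in Theorem \ref{th3}.

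The whole content is in proving $A=X$. Assuming $y\in X\setminus A$, the $\theta^n$-closedness of $A$ yields $W\in\mathcal{B}_y$ with $W\cap A=\emptyset$; writing $W=\overline{U_n}$ for a chain $y\in U_1$, $\overline{U_i}\subseteq U_{i+1}$, I would form the complementary chain $V_1\subseteq\dots\subseteq V_n$ (with $V_1=X\setminus W$, $V_n=X\setminus\overline{U_1}$), so that $V_n$ is an open $n$-hull of $A$; hence $\overline{V_n}$ is a closed $n$-hull of every $x\in A$ and $\overline{V_n}\cap U_1=\emptyset$. The decisive point is that the family fed into $sL_{s(n)}(X)$ must have each covered $x$ in $Int_{\theta^n}$ of its cover element, and this is exactly where the $S(2n)$ hypothesis is used in full strength: a chain $x\in G_1$, $\overline{G_i}\subseteq G_{i+1}$ of length $2n$ produces a closed $n$-hull sitting in the interior of another, since its inner $n$ links give the closed $n$-hull $\overline{G_n}$ while its outer $n$ links (via $G_{n+1},\dots,G_{2n}$) give a closed $n$-hull $D_x=\overline{G_{2n}}$ with $\overline{G_n}\subseteq G_{n+1}\subseteq Int(D_x)$, so that $x\in Int_{\theta^n}(Int(D_x))$. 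Thus ``fat'' closed $n$-hulls exist, and if one can choose such a fat $D_x\in\mathcal{B}_x$ inside $\overline{V_n}$ for each $x\in A$, then $\{Int(D_x):x\in A\}$ is an $S(n)$-cover of the $\theta^n$-closed set $A$ whose members all lie in $\overline{V_n}$.

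Granting this, $sL_{s(n)}(X)$ applied to the $S(n)$-cover yields $B\subseteq A$ with $|B|\le sL_{s(n)}(X)\le\tau$ and $A\subseteq\overline{\bigcup_{x\in B}Int(D_x)}\subseteq\overline{V_n}$; as the latter misses $U_1\ni y$ we get $\overline{\bigcup_{x\in B}Int(D_x)}\ne X$, and picking $\beta<\tau^+$ with $B\subseteq F_\beta$ contradicts the closing-off condition at stage $\beta+1$, since $F_{\beta+1}\subseteq A$ is then already covered. Hence $A=X$ and $|X|\le 2^\tau$, giving Corollary-free the bound $|X|\le 2^{sL_{s(n)}(X)\kappa_{\theta(n)}(X)}$.

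The step I expect to be the real obstacle is the one just italicized: I can always manufacture a fat closed $n$-hull of $x$ from the $S(2n)$ axiom, and $\overline{V_n}$ is a closed $n$-hull of $x$ so the base property of $\mathcal{B}_x$ furnishes some $D_x\subseteq\overline{V_n}$, but fatness need \emph{not} be inherited when one passes to a sub-hull. What is genuinely needed is the \emph{coinitiality} of fat hulls, namely that every closed $n$-hull of $x$ (in particular $\overline{V_n}$) contains a fat closed $n$-hull of $x$; equivalently, that $\mathcal{B}_x$ may be taken to consist of fat hulls without enlarging $\kappa_{\theta(n)}(X)$. Reconciling the demand of fatness (forcing a $2n$-long chain about $x$, supplied by $S(2n)$) with smallness inside $\overline{V_n}$ (forcing the union's closure to avoid $y$, supplied by the hull $W$) is precisely the delicate matching of the separation index $2n$ with the index $n$ in $sL_{s(n)}$ and $\kappa_{\theta(n)}$, and it is here that the argument must be carried out with care.
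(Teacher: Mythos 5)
Your reduction to the closing-off scheme of Theorem \ref{th1} is the paper's route, and your accounting of the critical step is accurate: for a closed $n$-hull $D_{x}=\overline{H_n}$ of $x$ with chain $H_1,\ldots,H_n$, the chain only gives $\overline{H_{n-1}}\subseteq H_n\subseteq Int(D_{x})$, i.e.\ $x\in Int_{\theta^{n-1}}(Int(D_{x}))$ --- one link short of the membership $x\in Int_{\theta^{n}}(Int(D_{x}))$ needed for $\{Int(D_{x}):x\in A\}$ to be an $S(n)$-cover with respect to $A$. But your attempt stops exactly there: you show that $S(2n)$ manufactures ``fat'' hulls (in fact the closure of any $(n+1)$-chain $G_1,\ldots,G_{n+1}$ already works, since $\overline{G_n}\subseteq G_{n+1}\subseteq Int(\overline{G_{n+1}})$, so $2n$-chains are more than is needed), you correctly observe that fatness is not inherited when passing to a sub-hull, you formulate the coinitiality statement that would repair this --- and you do not prove it. So as written this is not a proof; the decisive claim is left as an acknowledged obstacle. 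And the obstacle is genuine as you have framed it: the chains supplied by the $S(2n)$ axiom separate $x$ from individual \emph{points}, with no mechanism forcing their closures inside the prescribed hull $\overline{V_n}$ (the complementary chain $V_1,\ldots,V_n$ built from $W\in\mathcal{B}_{y}$ has length exactly $n$ and cannot be interpolated without regularity-type assumptions), while the closing-off bookkeeping forces $D_{x}$ to be drawn from the fixed bases $\mathcal{B}_{x}$ witnessing $\kappa_{\theta(n)}(X)$, so you cannot quietly replace them by a family of fat hulls without changing the cardinal invariant in the statement.

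For comparison: the paper's own proof of Theorem \ref{th5} is a one-line remark asserting that, in the proof of Theorem \ref{th1}, $\{Int(D_{x}):x\in A\}$ \emph{is} an $S(n)$-cover with respect to $A=cl_{\theta^n}(A)$, and concluding ``in the same way.'' That is, the claim you isolate as the crux is precisely the claim the paper states without argument; the paper contains no fat-hull lemma, no modified choice of $D_{x}$, and no other device addressing the off-by-one you describe. Your diagnosis is therefore well aimed, but your attempt remains incomplete at the very point where the paper's proof is unargued: to turn it into a proof you would have to either justify the asserted $S(n)$-cover property (for instance, prove that in an $S(2n)$-space every closed $n$-hull of $x$ contains a member of $\mathcal{B}_{x}$ that is the closure of an $(n+1)$-chain --- which does not follow from the definitions as given) or rework the construction so that the subfamily extracted via $sL_{s(n)}(X)$ still has its union's closure avoiding a neighborhood of $y$, e.g.\ by keeping everything inside the closed set $X\setminus U_1$; neither step is carried out in your proposal, nor in the paper.
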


\begin{proof}
Note that in the proof of Theorem \ref{th1}, $\{Int(D_{x}):x\in
A\}$ is an $S(n)$-cover with respect to $A$ and
$A=cl_{\theta^n}(A)$, hence, in the same way as in Theorem
\ref{th1}, we obtain a complete proof.
\end{proof}





In paper \cite{gk}, for a topological space $X$ and $n\in
\mathbb{N}$, Gotchev and Ko$\check{c}$inac define the interesting
cardinal functions $d_n(X)$ and $bt_n(X)$, called respectively
$S(n)$-density and $S(n)$-bitightness, and using them authors
prove two representative results: if $X$ is an $S(n)$-space, then
$|X|\leq 2^{2^{d_n(X)}}$ and $|X|\leq [d_n(X)]^{bt_n(X)}$.

\begin{remark} In paper \cite{gk}, the authors use the notation
$\chi_{2n}(X)$ instead of the author's $\kappa_{\theta(n)}(X)$ and
$\chi_{2n-1}(X)$ instead of the author's $\kappa_{\theta_0(n)}(X)$
(see Definition 2.7 (b) and (c) in \cite{gk}).
\end{remark}

 The paper \cite{got} also contains some results about cardinal
 inequalities for $S(n)$-spaces, extending some classical results
 of Hajnal and Juh$\acute{a}$sz, and Schr$\ddot{o}$der.

{\bf Acknowledgment.} I should like to Thanks to the anonymous
referee who read carefully the manuscript and helped my to
simplify and improve the presentation of results of the paper.
\bigskip





\bibliographystyle{model1a-num-names}
\bibliography{<your-bib-database>}



\end{document}